\newcommand{\arxiv}[1]{{\tt \href{http://arxiv.org/abs/#1}{arXiv:#1}}}
\newcommand{\old}[1]{}
\DeclareRobustCommand{\SkipTocEntry}[5]{}
\newtheorem{thm}{Theorem}
\newtheorem{lem}[thm]{Lemma}
\theoremstyle{remark}
\numberwithin{counter}{section}
\theoremstyle{definition}
\newtheorem{defn}{Definition}
\def\zero{\mathbf{0}}
\def\one{\mathbf{1}}
\def\N{\mathbb{N}}
\def\eps{\epsilon}
\begin{document}

\title[]{Multi-Eulerian tours of directed graphs}

\author{Matthew Farrell and Lionel Levine}

\thanks{This research was supported by NSF grant \href{http://www.nsf.gov/awardsearch/showAward?AWD_ID=1455272}{DMS-1455272} and a Sloan Fellowship.}
\address{Matthew Farrell, Department of Mathematics, Cornell University, Ithaca, NY 14850. msf235@cornell.edu}
\keywords{chip-firing, critical group, Eulerian digraph, Laplacian lattice, oriented spanning tree, period vector, Pham index, sandpile group}
\address{Lionel Levine, Department of Mathematics, Cornell University, Ithaca, NY 14850. \url{http://www.math.cornell.edu/~levine/}}

\date{September 21, 2015}
\keywords{BEST theorem, coEulerian digraph, Eulerian digraph, Laplacian, Markov chain tree theorem, oriented spanning tree, period vector, Pham index}
\subjclass[2010]{
05C05,  
05C20,  
05C30,  
05C45,  
05C50} 

\begin{abstract}
Not every graph has an Eulerian tour. But every finite, strongly connected graph has a \emph{multi-Eulerian tour}, which we define as a closed path that uses each directed edge at least once, and uses edges $e$ and $f$ the same number of times whenever $\mathrm{tail}(e)=\mathrm{tail}(f)$. 
This definition leads to a simple generalization of the BEST Theorem. We then show that the minimal length of a multi-Eulerian tour is bounded in terms of the Pham index, a measure of `Eulerianness'.
\end{abstract}

\maketitle

In the following $G=(V,E)$ denotes a finite directed graph, with loops and multiple edges permitted. We assume throughout that $G$ is \textbf{strongly connected}: for each $v,w \in V$ there are directed paths from $v$ to $w$ and from $w$ to $v$. An \textbf{Eulerian tour} of $G$ is a closed path that traverses each directed edge exactly once.  
Such a tour exists if and only if the indegree of each vertex equals its outdegree; the graphs with this property are called \textbf{Eulerian}.
The BEST theorem (named for its discoverers: de Bruijn, Ehrenfest, Smith and Tutte) counts the number of such tours.
The purpose of this note is to generalize the notion of Eulerian tour and the BEST theorem to any finite, strongly connected graph $G$. 

\begin{defn} Fix a vector $\pi \in \N^V$ with all entries strictly positive. A \textbf{$\pi$-Eulerian tour} of $G$ is a closed path that uses each directed edge $e$ of $G$ exactly $\pi_{\mathrm{tail}(e)}$ times.
\end{defn}

We will see shortly that every strongly connected $G$ has a $\pi$-Eulerian tour for suitable $\pi$, and count the number of such tours. To do so, recall the BEST theorem counting $\one$-Eulerian tours of an Eulerian directed multigraph $G$. Write $\eps_\pi(G,e)$ for the number of $\pi$-Eulerian tours of $G$ starting with a fixed edge $e$. 

\begin{thm} \emph{(BEST \cite{EdB51,TS})}
A finite, strongly connected multigraph $G$ has a $\one$-Eulerian tour if and only if the indegree of each vertex equals its outdegree, in which case the number of such tours starting with a fixed edge $e$ is
	\[ \eps_\one(G,e) = 
	\kappa_w \prod_{v \in V} (d_v -1)! \]
where $d_v$ is the outdegree of $v$; vertex $w$ is the tail of edge $e$, and $\kappa_w$ is the number of spanning trees of $G$ oriented toward $w$.
\end{thm}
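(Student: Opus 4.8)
\section*{Proof proposal}

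The plan is to prove the formula through the classical bijection between Eulerian tours that begin with a prescribed edge and triples consisting of an oriented spanning tree together with a choice of local edge-orderings. The two qualitative assertions are immediate: if an Eulerian tour exists it enters and leaves each vertex along all of its in- and out-edges, so indegree equals outdegree everywhere; conversely, once the count is established, strong connectivity forces $\kappa_w \ge 1$ and every factor $(d_v-1)!$ is $\ge 1$, so $\eps_\one(G,e)\ge 1$ and a tour exists.

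Fix the edge $e$ and set $w=\mathrm{tail}(e)$. Given an Eulerian tour $\mathcal T$ starting with $e$, for each $v\ne w$ let $\ell(v)$ be its \emph{last-exit edge}, the out-edge of $v$ along which $\mathcal T$ leaves $v$ for the final time. First I would check that $\{\ell(v):v\ne w\}$ is a spanning tree oriented toward $w$: it consists of $|V|-1$ edges, one out of each $v\ne w$, so it suffices to exclude a directed cycle among them, and a cycle $v_0\to v_1\to\cdots\to v_0$ of last-exit edges is impossible, since choosing $v_0$ whose last exit is latest along $\mathcal T$, the tour arrives at $v_1\ne w$ right after that exit and must leave $v_1$ again (otherwise the tour would terminate away from $w$), so the last exit of $v_1$ is strictly later. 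The remaining data recorded by $\mathcal T$ is: for each $v\ne w$, the order in which the $d_v-1$ non-last-exit out-edges of $v$ are first used (with $\ell(v)$ understood to come last); and for $w$, the order of the $d_w-1$ out-edges other than $e$ (with $e$ first). The number of such triples is exactly $\kappa_w\prod_{v\in V}(d_v-1)!$.

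It then remains to show that every triple arises from a unique Eulerian tour starting with $e$. One runs the natural reconstruction: start along $e$ and, on each arrival at a vertex, depart along the next unused out-edge in the prescribed order. This procedure cannot halt at a vertex $v\ne w$, because at the moment of any arrival at $v$ the number of arrivals so far is at most $\indeg(v)=d_v$ while the number of departures so far is one less, leaving an unused out-edge available. When the procedure halts at $w$ it has therefore used all $d_w$ out-edges of $w$, and since it departs and arrives at $w$ equally often it has also used all $d_w$ in-edges of $w$. If some edge were still unused, let $U$ be the nonempty set of vertices with an unused out-edge; then $w\notin U$, and for $v\in U$ the edge $\ell(v)$ must be unused (else all out-edges of $v$ are used), so its head $u$ has an unused in-edge, hence the run entered $u$ fewer than $\indeg(u)$ times and (entries equalling departures at $u\ne w$) therefore departed fewer than $\mathrm{outdeg}(u)=\indeg(u)$ times, so $u\in U$; walking up the spanning tree along $\ell(\cdot)$ from any $v\in U$ then forces the tree-edge into $w$ to be unused, contradicting the previous sentence. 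Thus the procedure always produces a genuine Eulerian tour starting with $e$; running it on the triple extracted from a tour $\mathcal T$ recovers $\mathcal T$ step by step, so extraction and reconstruction are mutually inverse, and the correspondence is a bijection yielding the stated count.

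The step I expect to require the most care is this no-early-stopping lemma — in particular the propagation argument that an unused edge forces an unused tree-edge all the way up to $w$ — together with confirming that reconstruction is a two-sided inverse of extraction, so that the correspondence is an exact bijection and the formula is an equality rather than merely a bound.
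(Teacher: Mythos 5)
The paper does not prove this statement: it is quoted as the classical BEST theorem with citations to van Aardenne-Ehrenfest--de Bruijn and Tutte--Smith, and is then used as the engine for Theorem~\ref{t.BETTER}. So there is nothing in the paper to compare against line by line; what you have written is the standard last-exit-tree proof of BEST, and it is correct and complete. Your extraction map (last-exit edges form an arborescence toward $w$, plus the exit orderings with $\ell(v)$ forced last and $e$ forced first) and the reconstruction map (follow the prescribed orderings; the walk can only halt at $w$; an unused edge would propagate up the tree to an unused in-edge of $w$) are exactly the two halves of the classical bijection, and you have supplied the two points that usually get glossed over --- the acyclicity of the last-exit edges via a latest-exit extremal argument, and the no-early-stopping lemma via the counting of arrivals versus departures. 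The only cosmetic remark is that deducing existence of a tour from positivity of the count is fine but slightly roundabout; the reconstruction argument applied to any oriented spanning tree (which exists by strong connectivity) already produces a tour directly. Net effect: you have made the paper's black box self-contained, at the cost of a page of argument the authors deliberately outsourced.
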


The \textbf{graph Laplacian} is the $V \times V$ matrix
	\[ \Delta_{uv} = \begin{cases} d_v - d_{vv}, & u=v \\
							-d_{vu} & u \neq v \end{cases} \]
where $d_{vu}$ is the number of edges directed from $v$ to $u$, and $d_v = \sum_u d_{vu}$ is the outdegree of $v$.  Observing that the `indegree$=$outdegree' condition above is equivalent to $\Delta \one = \zero$ where $\one$ is the all ones vector, we arrive at the statement of our main result.

\begin{thm}
\label{t.BETTER}
Let $G=(V,E)$ be a strongly connected directed multigraph with Laplacian $\Delta$, and let $\pi \in \N^V$. Then $G$ has a $\pi$-Eulerian tour if and only if 
	\[ \Delta \pi = \zero. \]  
If $\Delta \pi = \zero$, then the number of $\pi$-Eulerian tours starting with edge $e$ is given by
\[
\epsilon_\pi(G,e)= \kappa_w \prod_{v\in V} \frac {(d_{v}\pi_v-1)!}{(\pi_v!)^{d_{v}-1}(\pi_v-1)!}
\]
where $d_v$ is the outdegree of $v$; vertex $w$ is the tail of edge $e$, and $\kappa_w$ is the number of spanning trees of $G$ oriented toward $w$.
\end{thm}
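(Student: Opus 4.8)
The plan is to reduce to the classical BEST Theorem by passing to an auxiliary multigraph. Given $\pi$, let $\hat G$ be obtained from $G$ by replacing each edge $e$ with $\pi_{\mathrm{tail}(e)}$ parallel copies having the same tail and head; since $G$ is strongly connected and $\pi$ has positive entries, $\hat G$ is strongly connected. In $\hat G$ the outdegree of a vertex $v$ is $\hat d_v = d_v\pi_v$ and its indegree is $\sum_u d_{uv}\pi_u$, so $\hat G$ is Eulerian precisely when $d_v\pi_v = \sum_u d_{uv}\pi_u$ for every $v$, i.e. when $\Delta\pi = \zero$. For the ``only if'' half of the first claim, I would observe directly that a closed path using each edge $e$ exactly $\pi_{\mathrm{tail}(e)}$ times leaves each vertex $v$ a total of $d_v\pi_v$ times and enters it $\sum_u d_{uv}\pi_u$ times, and these counts must agree, which is exactly $(\Delta\pi)_v = 0$.

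The core of the argument is a uniform many-to-one correspondence between $\pi$-Eulerian tours of $G$ and Eulerian tours of $\hat G$. First I would define the \emph{lifts} of a $\pi$-Eulerian tour $T=e_1e_2\cdots e_m$ of $G$: for each edge $f$ of $G$ the set of indices $i$ with $e_i=f$ has size $\pi_{\mathrm{tail}(f)}$, which is the number of copies of $f$ in $\hat G$, so an independent choice, for each $f$, of a bijection between these indices and the copies of $f$ turns $T$ into a closed path in $\hat G$ using each edge of $\hat G$ exactly once, i.e. an Eulerian tour; conversely, forgetting which copy was used sends every Eulerian tour of $\hat G$ onto a $\pi$-Eulerian tour of $G$, realizing it as one of these lifts. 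Thus $T$ has exactly $\prod_f\pi_{\mathrm{tail}(f)}! = \prod_v(\pi_v!)^{d_v}$ lifts. Fixing $e$ with $w=\mathrm{tail}(e)$ and a copy $\hat e$ of $e$, and restricting to tours starting with $e_1=e$, respectively with $\hat e$, the only extra constraint is that $\hat e$ fill the first of the $\pi_w$ slots belonging to $e$, which cuts the count of lifts by a factor $\pi_w$; so each $\pi$-Eulerian tour starting with $e$ has exactly $N:=\tfrac1{\pi_w}\prod_v(\pi_v!)^{d_v}$ lifts starting with $\hat e$. Since the BEST count of Eulerian tours of $\hat G$ beginning with $\hat e$ depends only on $\mathrm{tail}(\hat e)=w$, the correspondence is uniformly $N$-to-$1$, giving $\epsilon_\pi(G,e) = \tfrac1N\,\hat\kappa_w\prod_v(\hat d_v-1)! = \tfrac{\pi_w}{\prod_v(\pi_v!)^{d_v}}\,\hat\kappa_w\prod_v(d_v\pi_v-1)!$, where $\hat\kappa_w$ counts spanning trees of $\hat G$ oriented toward $w$.

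Finally I would relate $\hat\kappa_w$ to $\kappa_w$: because each edge from $v$ to $u$ becomes $\pi_v$ edges, the Laplacian of $\hat G$ is $\hat\Delta = \Delta\,\mathrm{diag}(\pi)$, so deleting the $w$-th row and column and invoking the matrix-tree theorem for both $G$ and $\hat G$ gives $\hat\kappa_w = \det(\hat\Delta_w) = \det(\Delta_w)\prod_{v\neq w}\pi_v = \kappa_w\prod_{v\neq w}\pi_v$. Substituting, and using $\pi_w\prod_{v\neq w}\pi_v=\prod_v\pi_v$ together with $\pi_v/(\pi_v!)^{d_v} = 1/\big((\pi_v!)^{d_v-1}(\pi_v-1)!\big)$, collapses the expression to $\kappa_w\prod_v\frac{(d_v\pi_v-1)!}{(\pi_v!)^{d_v-1}(\pi_v-1)!}$, as claimed. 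The ``if'' direction then drops out: when $\Delta\pi=\zero$ the graph $\hat G$ is Eulerian, hence has an Eulerian tour (being strongly connected), which projects to a $\pi$-Eulerian tour of $G$. I expect the only genuinely delicate point to be the lifting step — verifying that \emph{every} relabeling of a $\pi$-Eulerian tour is honestly an Eulerian tour of $\hat G$, and that fixing the initial edge breaks the relabeling symmetry by precisely $\pi_w$ — but this is careful bookkeeping rather than a real obstacle, and the matrix-tree computation needs nothing beyond the identity $\hat\Delta=\Delta\,\mathrm{diag}(\pi)$.
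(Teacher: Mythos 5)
Your proof is correct and follows essentially the same route as the paper: blow up $G$ to the Eulerian multigraph $\hat G$ (the paper's $\tilde G$), apply the BEST theorem there, and count lifts. The only cosmetic differences are that you compute the $\frac{1}{\pi_w}\prod_v(\pi_v!)^{d_v}$-to-one multiplicity directly for tours with a fixed starting edge (the paper fixes the starting vertex and then passes to a fixed starting edge by cyclic shifting, dividing by $d_w$ and $d_w\pi_w$ respectively), and that you obtain $\hat\kappa_w=\kappa_w\prod_{v\neq w}\pi_v$ from the matrix-tree theorem rather than from the paper's direct bijection on oriented spanning trees.
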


Note that the ratio on the right side is a multinomial coefficient and hence an integer. 
Next we turn to the proof, which is a straightforward application of the BEST theorem. The same device of constructing an Eulerian multigraph from a strongly connected graph was used in \cite[Theorem~3.18]{AB} to relate the Riemann-Roch property of `row chip-firing' to that of `column chip firing'.

\begin{proof}
Define a multigraph $\tilde{G}$ by replacing each edge $e$ of $G$ from $u$ to $v$ by $\pi_u$ edges $e^1,\ldots,e^{\pi_u}$ from $u$ to $v$. Each vertex $v$ of $\tilde{G}$ has outdegree $d_v \pi_v$ and indegree $\sum_{u \in V} \pi_u d_{uv}$, so $\tilde{G}$ is Eulerian if and only if $\Delta \pi = \zero$.

If $(e_1^{i_1}, \ldots, e_m^{i_m})$ is a $\one$-Eulerian tour of $\tilde{G}$, then $(e_1, \ldots, e_m)$ is a $\pi$-Eulerian tour of $G$. Conversely, for each $\pi$-Eulerian tour of $G$ beginning with a fixed vertex $w$, the occurrences of each edge $e$ in the tour can be labeled with an arbitrary permutation of $\{1,\ldots,\pi_{\mathrm{tail}(e)}\}$ to obtain a $\one$-Eulerian tour of $\tilde G$. Hence
	\[ \eps_\one(\tilde{G},w) = \eps_\pi(G,w) \prod_{v \in V} (\pi_v!)^{d_v}. \]
In particular, $G$ has a $\pi$-Eulerian tour if and only if $\tilde G$ is Eulerian.

To complete the counting in the case when $\tilde G$ is Eulerian, the BEST theorem gives the number of Eulerian tours of $\tilde{G}$ starting with a fixed edge $e^1$ with tail $w$, namely
	\[ \eps_\one(\tilde{G},e^1) = \tilde{\kappa}_w \prod_{v \in V} (d_v \pi_v - 1)!. \]
where
	\begin{equation} \label{e.tildekappa} \tilde \kappa_w = \kappa_w \prod_{v \neq w} \pi_v  \end{equation}
is the number of spanning trees of $\tilde G$ oriented toward $w$,
since each spanning tree of $G$ oriented toward $w$ gives rise to $\prod_{v \neq w} \pi_v$ spanning trees of $\tilde G$.

By cyclically shifting the tour, the number of $\pi$-Eulerian tours of $G$ starting with a given edge $e=(w,v)$ does not depend on $v$. Hence
	\[ \frac{\eps_\pi(G,e)}{\eps_\pi(G,w)} = \frac{1}{d_w}, \qquad \frac{\eps_\one(\tilde{G}, e^1)}{\eps_\one(\tilde{G},w)} = \frac{1}{d_w \pi_w}. \]
We conclude that
	\begin{align*} \eps_\pi(G,e) 
					&= \frac{1}{d_w} (d_w\pi_w) \tilde{\kappa}_w \prod_{v \in V} \frac{(d_v \pi_v -1)!}{(\pi_v !)^{d_v}}
	\end{align*}
which together with \eqref{e.tildekappa} completes the proof.
\end{proof}

The watchful reader must now be wondering, is there a suitable vector $\pi$ with \emph{positive integer} entries in the kernel of the Laplacian? The answer is yes. Following Bj\"{o}rner and Lov\'{a}sz, we say that a vector $\mathbf{p} \in \N^V$ is a \textbf{period vector} for $G$ if $\mathbf{p} \neq \zero$ and $\Delta\mathbf{p}=\zero$. A period vector is \textbf{primitive} if the greatest common divisor of its entries is $1$.

\old{
\begin{defn} \cite{BL92}
Given a graph $G$ with Laplacian $\Delta$, a vector $\mathbf{p}\in\mathbb{N}^{n}$
is called a \textbf{period vector} for $G$ if $\mathbf{p} \neq \zero$ and $\Delta\mathbf{p}=\zero$.
A period vector is \textbf{primitive} if the greatest common divisor of its entries is $1$.
\end{defn}
 
The following lemma sums up some useful properties of period vectors.
}

\begin{lem} \cite[Prop.\ 4.1]{BL92} 
\label{lem:period} 
A strongly connected multigraph $G$ has a unique primitive period vector $\pi_{G}$. All entries of $\pi_G$ are strictly positive, and all period vectors of $G$ are of the form $n\pi_{G}$ for $n=1,2,\ldots$. 
Moreover, if $G$ is Eulerian, then $\pi_{G}=\mathbf{1}$.
\end{lem}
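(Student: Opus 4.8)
The plan is to prove Lemma~\ref{lem:period} by combining the Perron--Frobenius theorem with the Matrix-Tree theorem. First I would observe that $\Delta\mathbf{p}=\zero$ says exactly that the column vector $\mathbf{p}$ is a right null vector of the Laplacian. Write $\Delta = D - A$ where $D = \mathrm{diag}(d_v)$ (outdegrees, with $d_v > 0$ since $G$ is strongly connected) and $A$ is the adjacency matrix with $A_{uv} = d_{vu}$; equivalently $\Delta^{\mathsf{T}}$ is $\mathrm{diag}(d_v) - A^{\mathsf{T}}$. Then $\Delta \mathbf{p} = \zero$ is equivalent to $D\mathbf{p} = A\mathbf{p}$, i.e. $P\mathbf{p} = \mathbf{p}$ where $P = D^{-1}A$ is the transition matrix of the random walk on $G$ that moves from $v$ to $u$ with probability $d_{vu}/d_v$. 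So period vectors (up to scaling) are precisely the stationary distributions of this irreducible Markov chain, and existence, positivity, and uniqueness up to scaling all follow from the Perron--Frobenius theorem applied to the irreducible nonnegative matrix $P^{\mathsf{T}}$ (strong connectivity of $G$ is exactly irreducibility). This gives a one-dimensional space of period vectors; intersecting with $\N^V$ and dividing by the gcd yields a unique primitive $\pi_G$, and since the stationary vector has all positive entries, so does $\pi_G$, and every period vector is a positive integer multiple $n\pi_G$.

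Next I would pin down $\pi_G$ more explicitly via the Markov chain tree theorem (or, what amounts to the same computation, the Matrix-Tree theorem): the stationary distribution of the walk is proportional to the vector whose $v$-th entry is $\kappa_v$, the number of spanning trees oriented toward $v$, weighted appropriately. More precisely, $\left(d_v \kappa_v\right)_{v\in V}$ — or in the simple-graph normalization $\left(\kappa_v\right)_v$ rescaled by outdegrees — lies in the kernel of $\Delta$. This identifies the primitive period vector as the reduced version of this explicit integer vector and in particular confirms $\pi_G \in \N^V$ with positive entries without any appeal to rationality arguments. (One could alternatively argue rationality directly: $\ker\Delta$ is defined over $\Q$ since $\Delta$ has integer entries and rank $|V|-1$, so the Perron direction is a rational ray, hence contains a primitive integer vector.)

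Finally, for the Eulerian case, I would simply note that $\Delta\one = \zero$ is equivalent to indegree $=$ outdegree at every vertex, which is the definition of Eulerian; so when $G$ is Eulerian, $\one$ is a period vector, and since it already has gcd $1$ it must be the primitive one, $\pi_G = \one$.

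I expect the main obstacle to be purely expository rather than mathematical: deciding how much of the Perron--Frobenius / Markov chain machinery to invoke as a black box versus reprove, and making sure the normalization conventions (the paper's Laplacian acts on the right, outdegrees live on the diagonal) are threaded consistently so that ``$\Delta\mathbf{p}=\zero$'' really is ``$\mathbf{p}$ is a stationary-type vector'' and not its transpose. Since the statement is cited from \cite{BL92}, it is also legitimate to give only the short argument above and refer the reader there for a self-contained proof.
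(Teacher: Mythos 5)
The paper offers no proof of this lemma at all: it is imported verbatim from Bj\"orner--Lov\'asz \cite[Prop.~4.1]{BL92}, so there is no in-paper argument to compare against, and your closing suggestion --- either give the short Perron--Frobenius sketch or just cite the source --- matches what the authors actually do (they cite). Your proposed proof is the standard one and is correct in substance: $\Delta\mathbf{p}=\zero$ is an eigenvector equation for an irreducible nonnegative matrix, Perron--Frobenius gives a one-dimensional kernel spanned by a strictly positive vector, rationality of $\ker\Delta$ (integer matrix of rank $|V|-1$) gives a primitive integer generator, and the Eulerian case is immediate because $\Delta\one=\zero$ is exactly the indegree-equals-outdegree condition.

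That said, both of the slips you predicted under ``normalization conventions'' actually occur. First, with your $A_{uv}=d_{vu}$ the matrix $P=D^{-1}A$ has entries $P_{uv}=d_{vu}/d_u$, whose $u$-th row sums to $\mathrm{indeg}(u)/d_u$; this is \emph{not} the transition matrix of the walk. It is, however, the $D$-conjugate of $AD^{-1}$, the transpose of the true transition matrix $D^{-1}A^{\mathsf{T}}$, so it is still irreducible, nonnegative, and of spectral radius $1$ (e.g.\ because $(d_v)_v$ is a positive left $1$-eigenvector of $P$), and Perron--Frobenius applies exactly as you want --- but the sentence identifying $P$ with the transition matrix should be corrected. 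Second, with the paper's conventions it is $\kappa$ itself that lies in $\ker\Delta$ (this is precisely Lemma~\ref{lem:kerper}), whereas $(d_v\kappa_v)_v$ is the stationary distribution of the walk; you have these two interchanged, and as written ``$(d_v\kappa_v)_{v\in V}$ lies in the kernel of $\Delta$'' is false unless all outdegrees coincide. Finally, the claim that every period vector equals $n\pi_G$ for an \emph{integer} $n$ deserves one more line: a period vector is a positive rational multiple $(a/b)\pi_G$ in lowest terms, and clearing denominators forces $b$ to divide every entry of $\pi_G$, hence $b=1$ by primitivity. None of these affects the architecture of the argument, which is sound.
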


Let $\kappa_v$ denote the number of spanning trees of $G$ oriented toward $v$.
Broder \cite{Broder} and Aldous \cite{Aldous} observed that $\kappa$ is a period vector!  This result is sometimes called the `Markov chain tree theorem'. 

\begin{lem}[\cite{Aldous,Broder}]
\label{lem:kerper} $\Delta \kappa = \zero$. 
\end{lem}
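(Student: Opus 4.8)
The plan is to show that for each vertex $u$, the $u$-th coordinate of $\Delta\kappa$ vanishes, i.e. $d_u\kappa_u = \sum_{v} d_{uv}\kappa_v$ where the sum runs over all $v$ (noting $d_{uu}$ contributes $d_{uu}\kappa_u$ on the right, which matches the $-d_{uu}$ correction in the diagonal of $\Delta$; so equivalently $(d_u - d_{uu})\kappa_u = \sum_{v\neq u} d_{uv}\kappa_v$). The combinatorial meaning: the left side counts pairs $(T, e)$ where $T$ is a spanning tree oriented toward $u$ and $e$ is an edge out of $u$ (not a loop); the right side counts pairs $(T', e')$ where $e'$ is an edge from $u$ to some $v\neq u$ and $T'$ is a spanning tree oriented toward $v$. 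I would exhibit a bijection between these two sets.

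The key step is the bijection. Given $(T, e)$ with $T$ oriented toward $u$ and $e = (u,v)$ an edge leaving $u$, form $T' = T \cup \{e\} \setminus \{e''\}$, where $e''$ is the unique edge of $T$ leaving $v$ along the path from $v$ toward $u$ in $T$ — unless $v = u$ is already excluded, so $v \neq u$ and such an $e''$ exists. Adding $e$ to $T$ creates a unique cycle (the edge $e$ together with the $T$-path from $v$ to $u$), and removing $e''$ breaks exactly that cycle, leaving a spanning tree. One checks that $T'$ is oriented toward $v$: every vertex still has out-degree $1$ except now $v$ (which lost its out-edge $e''$), and there is no cycle, so $T'$ is an in-tree rooted at $v$. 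Set $e' = e''$; then $(T', e')$ lies in the target set. For the inverse, given $(T', e')$ with $e' = (v, \cdot)$ on the path toward $u$ in... — more carefully, given $T'$ oriented toward $v$ and an edge $e'$ from $u$ to $v$, one recovers $(T,e)$ by $T = T'\cup\{e'\}\setminus\{\text{the unique out-edge of } u \text{ in the cycle created}\}$ and $e = $ that removed edge. The standard care is in verifying the two maps are mutually inverse, which amounts to tracking the single cycle that is created and destroyed.

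I expect the main obstacle to be the bookkeeping in the bijection — specifically, pinning down precisely which edge gets removed in each direction and confirming the two constructions undo each other, since both involve "the unique cycle formed by adding one edge to a tree" and one must verify that the added and removed edges are swapped consistently. An alternative, perhaps cleaner, route avoids the explicit bijection: interpret $\kappa_v$ via the Matrix-Tree theorem as (a signed cofactor, or) the stationary distribution of the random walk driven by $\Delta$ up to scaling, and then $\Delta\kappa = \zero$ is just the stationarity equation $\pi P = \pi$ rewritten; but proving $\kappa$ gives the stationary distribution is itself the content of the Markov chain tree theorem, so this merely relocates the work. Given that Lemma~\ref{lem:period} already guarantees a one-dimensional positive kernel, yet another approach is to verify $\Delta\kappa = \zero$ by the all-minors matrix-tree theorem applied to $\Delta$, using that deleting row and column $v$ from $\Delta$ has determinant $\kappa_v$ and expanding a suitable determinant that is manifestly zero (a matrix with a repeated column). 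I would present the direct bijection as the main proof since it is the most self-contained.
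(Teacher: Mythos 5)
The paper offers no proof of this lemma---it is cited to Aldous and Broder---so your argument stands on its own, and as written it has two genuine errors. First, you have transposed the Laplacian. With the paper's convention $\Delta_{uv}=-d_{vu}$ for $u\neq v$, the equation $(\Delta\kappa)_u=0$ reads
\[
(d_u-d_{uu})\,\kappa_u \;=\; \sum_{v\neq u} d_{vu}\,\kappa_v ,
\]
so the right-hand side counts pairs $(T',e')$ with $e'$ an edge \emph{into} $u$ and $T'$ a spanning tree rooted at the \emph{tail} of $e'$. The identity you set out to prove, $(d_u-d_{uu})\kappa_u=\sum_{v\neq u}d_{uv}\kappa_v$ with edges \emph{out of} $u$, is the statement $\Delta^{T}\kappa=\zero$, which is false for general strongly connected digraphs: on two vertices with one edge $1\to 2$ and two edges $2\to 1$ one has $\kappa_1=2$, $\kappa_2=1$, and $d_1\kappa_1=2\neq 1=d_{12}\kappa_2$. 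Second, even granting your version of the identity, the bijection does not land where you claim: from $(T,e)$ with $e=(u,v)$ you output $e'=e''$, the out-edge of $v$ in $T$, which is an edge \emph{leaving} $v$, not an edge from $u$ to $v$, so $(T',e')$ is not in your stated target set; and the proposed inverse (add $e'=(u,v)$ to a tree rooted at $v$ and delete the out-edge of $u$) leaves $v$ with out-degree $0$, hence returns a tree still rooted at $v$ rather than at $u$. A map that would prove a false identity cannot be a bijection, so this is not mere bookkeeping.

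The underlying idea---fix the unicycle $T\cup\{e\}$ and delete a different edge of its unique cycle---is sound and can be repaired. Given $T$ rooted at $u$ and $e$ a non-loop edge out of $u$, the unique directed cycle of $T\cup\{e\}$ passes through $u$; delete its unique edge \emph{entering} $u$, say $e'=(v,u)$ (the \emph{last} edge of the $T$-path from $\mathrm{head}(e)$ back to $u$, not the first). The result is a spanning tree rooted at $v=\mathrm{tail}(e')$, and $(T,e)\mapsto(T\cup\{e\}\setminus\{e'\},\,e')$ is a bijection onto the pairs counted by $\sum_{v\neq u}d_{vu}\kappa_v$; the inverse adds $e'$ back and deletes the unique cycle edge leaving $u$, both sides being identified with unicycles whose cycle passes through $u$ (loops at $u$ are excluded on both sides by the $-d_{uu}$ correction). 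Alternatively, the linear-algebra route you mention in passing is correct and shorter: the row Laplacian $L=\Delta^{T}$ satisfies $L\one=\zero$, so for each fixed row of $L$ all cofactors along that row are equal, hence equal to the principal cofactor $\kappa_v$ by the matrix-tree theorem; then $\mathrm{adj}(L)\,L=\det(L)\,I=0$ gives $\kappa^{T}L=\zero$, which is exactly $\Delta\kappa=\zero$.
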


Lemmas \ref{lem:period} and \ref{lem:kerper} imply that the vector $\pi=\frac{1}{M_G} \kappa$ is the unique primitive period vector of $G$, where
	\[ M_G = \gcd \{ \kappa_v \,:\, v \in V\} \]
is the greatest common divisor of the oriented spanning tree counts.  
Our next result expresses the minimal length of a multi-Eulerian tour in terms of $M_G$ and the number
	\[ U_G = \sum_{v \in V} \kappa_v d_v \]
of \textbf{unicycles} in $G$ (that is, pairs $(t,e)$ where $t$ is an oriented spanning tree and $e$ is an outgoing edge from the root of $t$). 


\begin{thm}
\label{t.minimal}
The minimal length of a multi-Eulerian tour in $G$ is $U_G/M_G$.
\end{thm}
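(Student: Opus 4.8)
The plan is to realize a multi-Eulerian tour as a $\pi_G$-Eulerian tour and compute its length directly. A $\pi$-Eulerian tour uses each edge $e$ exactly $\pi_{\mathrm{tail}(e)}$ times, so its length is $\sum_{e \in E} \pi_{\mathrm{tail}(e)} = \sum_{v \in V} d_v \pi_v$. By Theorem~\ref{t.BETTER}, the condition $\Delta \pi = \zero$ is exactly what makes a $\pi$-Eulerian tour exist, so the admissible $\pi$ are precisely the period vectors together with (if we allowed it) $\zero$; but a genuine closed path using every edge at least once forces all entries of $\pi$ to be positive. By Lemma~\ref{lem:period}, every such $\pi$ is $n \pi_G$ for some positive integer $n$, and the length of the corresponding tour is $n \sum_v d_v (\pi_G)_v$. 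Hence the minimal length is achieved at $n=1$, i.e.\ by a $\pi_G$-Eulerian tour, and equals $\sum_v d_v (\pi_G)_v$.

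The remaining step is to identify this sum with $U_G/M_G$. By Lemmas~\ref{lem:period} and~\ref{lem:kerper}, $\pi_G = \kappa / M_G$ where $\kappa_v$ is the number of oriented spanning trees rooted at $v$ and $M_G = \gcd\{\kappa_v\}$. Substituting,
\[
\sum_{v \in V} d_v (\pi_G)_v = \frac{1}{M_G} \sum_{v \in V} d_v \kappa_v = \frac{U_G}{M_G},
\]
which is exactly the claimed value.

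Two points need a little care. First, one must check that a closed path which uses every directed edge at least once, and uses $e,f$ equally whenever $\mathrm{tail}(e)=\mathrm{tail}(f)$, is literally the same thing as a $\pi$-Eulerian tour for some $\pi \in \N^V$ with all entries strictly positive: the multiplicity with which the path traverses edges out of $v$ is a common value $\pi_v \ge 1$, and closedness of the path forces the in- and out-multiplicities at each vertex to balance, which is precisely $\Delta \pi = \zero$ (this is the forward direction already contained in Theorem~\ref{t.BETTER}). Second, one should confirm that for $n=1$ a tour actually exists and not merely that $\pi_G$ lies in the kernel — but this is exactly the existence half of Theorem~\ref{t.BETTER} applied to $\pi = \pi_G$, whose entries are strictly positive by Lemma~\ref{lem:period}. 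Neither point is a real obstacle; the only mild subtlety is the bookkeeping that the minimum over admissible multiplicity vectors is attained at the primitive period vector, which is immediate from the ``all period vectors are $n\pi_G$'' clause of Lemma~\ref{lem:period}.
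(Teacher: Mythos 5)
Your proof is correct and follows essentially the same route as the paper: express the length of a $\pi$-Eulerian tour as $\sum_v d_v\pi_v$, use Theorem~\ref{t.BETTER} with Lemmas~\ref{lem:period} and~\ref{lem:kerper} to identify the admissible $\pi$ as the positive multiples of $\kappa/M_G$, and minimize. The extra care you take to match the definition of multi-Eulerian tour with that of $\pi$-Eulerian tour is a fine (if implicit in the paper) addition, but the argument is the same.
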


\begin{proof} The length of a $\pi$-Eulerian tour is $\sum_{v \in V} \pi_v d_v$. By Theorem~\ref{t.BETTER} along with Lemmas~\ref{lem:period} and~\ref{lem:kerper}, there exists a $\pi$-Eulerian tour if and only if $\pi$ is a positive integer multiple of the primitive period vector $\frac{1}{M_G} \kappa$. The result follows.
\end{proof}

A special class of multi-Eulerian tours are the simple rotor walks \cite{PDDK,WLB,HLMPPW,HP,Trung}. In a \textbf{simple rotor walk}, the successive exits from each vertex repeatedly cycle through a given cyclic permutation of the outgoing edges from that vertex.  If $G$ is Eulerian then a simple rotor walk on $G$ eventually settles into an Eulerian tour which it traces repeatedly.  More generally, if $G$ is strongly connected then a simple rotor walk eventually settles into a $\pi$-Eulerian tour where $\pi$ is the primitive period vector of $G$.

Trung Van Pham introduced the quantity $M_G$ in \cite{Trung} in order to count orbits of the rotor-router operation. In \cite{coEulerian} we have called $M_G$ the \textbf{Pham index} of $G$ and studied the graphs with $M_G=1$, which we called \textbf{coEulerian graphs}. The significance of $M_G$ is not readily apparent from its definition, but we argue in \cite{coEulerian} that $M_G$ measures `Eulerianness'. Theorem~\ref{t.minimal} makes this explicit, in that the minimal length of a multi-Eulerian tour depends inversely on $M_G$.

\end{document}